\theoremstyle{plain}
\newtheorem{thm}{Theorem}
\newtheorem{coro}{Corollary}
\newtheorem{lemma}{Lemma}[section]
\newtheorem{prop}{Proposition}[section]
\theoremstyle{definition}
\newtheorem{asm}{Assumption}
\theoremstyle{remark}
\newtheorem{remark}{Remark}[section]
\newcommand{\RR}{\mathbb{R}}
\newcommand{\CC}{\mathbb{C}}
\newcommand{\TT}{\mathbb{T}}
\newcommand{\ZZ}{\mathbb{Z}}
\newcommand{\Cinf}{C^{\infty}}
\newcommand{\ve}{\varepsilon}
\newcommand{\ellhd}{\ell^2(h\mathbb{Z}^d)}
\newcommand{\supp}{\mathop{\mathrm{supp}}}
\newcommand{\bcdot}{\boldsymbol{\cdot}}
\begin{document}

\title{Semiclassical analysis and the Agmon-Finsler metric for discrete Schr\"{o}dinger operators}
\author{Kentaro Kameoka}

\date{}


\maketitle

\begin{abstract}
The Agmon estimate for multi-dimensional discrete Schr\"{o}dinger operators 
is studied 
with emphasis on the microlocal analysis on the torus. 
We first consider the semiclassical setting 
where semiclassical continuous Schr\"{o}dinger operators are discretized with the mesh width 
proportional to the semiclassical parameter. 
Under this setting, 
the Agmon estimate for eigenfunctions is described by 
an Agmon metric, which is
a Finsler metric rather than a Riemannian metric. 
Klein-Rosenberger\,(2008) 
proved this by a different argument in the case of a potential minimum. 
We also prove the Agmon estimate and the optimal anisotropic 
exponential decay of eigenfunctions 
for discrete Schr\"{o}dinger operators in the non-semiclassical standard setting.
\end{abstract}

\section{Introduction}
We first discuss the semiclassical Agmon estimate for discrete Schr\"{o}dinger operators. 
We start with a continuous semiclassical Schr\"{o}dinger operator
\[
 H^{\mathrm{cont}}=H^{\mathrm{cont}}(h)=-h^2\Delta+V(x) 
\hspace{0.3cm} \text{on} \hspace{0.3cm} L^2(\mathbb{R}^d),
\]
where $V\in C^{\infty}(\mathbb{R}^d; \mathbb{R})$. 
The dimension $d\in \mathbb{Z}_{>0}$ is fixed throughout this paper. 
If we discretize this operator with mesh width $\tau>0$, 
we obtain a discrete Schr\"{o}dinger operator $H^{\tau}(h)$ on $\ell^2(\tau \mathbb{Z}^d)$ 
defined by 
\[
H^{\tau}(h)u(x)=-\left(\frac{h}{\tau}\right)^2\sum_{|x-y|=\tau}(u(y)-u(x))+V(x)u(x), 
\]
where $x, y\in \tau \mathbb{Z}^d \subset \mathbb{R}^d$ and $u\in \ell^2(\tau \mathbb{Z}^d)$. 
A rich quantum-classical correspondence is obtained if 
we discretize $H^{\mathrm{cont}}(h)$ with the mesh width proportional to 
the semiclassical parameter ($\tau \sim h$). 
For simplicity, we put $\tau=h$ and obtain a semiclassical discrete Schr\"{o}dinger operator $H(h)$
on $\ell^2(h\mathbb{Z}^d)$ defined by 
\[
H(h)u(x)=-\sum_{|x-y|=h}(u(y)-u(x))+V(x)u(x), 
\]
where $x, y\in h\mathbb{Z}^d \subset \mathbb{R}^d$ and $u\in \ell^2(h\mathbb{Z}^d)$. 
This setting was studied in \cite{HS} for $d=1$ in the context of the Harper operator
and in \cite{KR1},\,\cite{Ra} for general $d$. 

The limit $\tau \to 0$ for fixed $h>0$ is the problem of the continuum limit 
and various quantities related to $H^{\tau}(h)$ converge to those of $H^{\mathrm{cont}}(h)$, 
that is ``$\lim_{\tau \to 0}H^{\tau}(h)=H^{\mathrm{cont}}(h)$'' 
(see for instance, \cite{IJ}~\cite{NT}). 
In the limit $h \to 0$ for fixed $\tau>0$, $H^{\tau}(h)$ converges to $V(x)$ 
on $\ell^2(\tau \mathbb{Z}^d)$ since difference operators are bounded. 
The related rescaled problem of $h^{-2}H^{\tau}(h)$ when $h\to 0$ for fixed $\tau>0$ 
is studied in~\cite{BPL}. 
It may be interesting to study $\tau=h^{\alpha}$ for $1<\alpha<\infty$. 
The continuum limit formally corresponds to $\alpha=\infty$.

In this paper, the semiclassical discrete Fourier transform 
$\mathcal{F}_{h}: \ell^2(h\mathbb{Z}^d)\to L^2(\mathbb{T}^d)$, 
where $\mathbb{T}^d=\mathbb{R}^d / 2\pi \mathbb{Z}^d$, is defined by 
\[
\mathcal{F}_{h}u(\xi)=(2\pi)^{-d/2}\sum_{x \in h\mathbb{Z}^d}u(x)e^{i\langle x, \xi \rangle/h}.
\]
Then we have 
\[
\widetilde{H}(h)\stackrel{\mathrm{def}}{=}\mathcal{F}_{h}H(h)\mathcal{F}_{h}^{-1}=
\sum_{j=1}^d (2-2\cos\xi_j)+V(hD_{\xi}).
\]
Here $V(hD_{\xi})$ denotes the semiclassical pseudodifferential operator on $\mathbb{T}^d$ with 
the symbol $V(x)$ (see Section 2 for the definition), 
where $x \in \mathbb{R}^d$ is interpreted as the dual variable of 
$\xi \in \mathbb{T}^d$ on $T^*\mathbb{T}^d$.
Thus $\widetilde{H}(h)$ is the semiclassical quantization of 
the classical Hamiltonian 
$p(\xi, x)=\sum_{j=1}^d (2-2\cos\xi_j)+V(x)\in C^{\infty}(T^*\mathbb{T}^d)$ on the torus, 
and it is expected that various quantities related to $H(h)$ are asymptotically described 
in terms of $p(\xi, x)$, that is ``$\lim_{h \to 0}H(h)=p(\xi, x)$''. 
For instance, the Weyl law for eigenvalues naturally follows (Proposition~\ref{thm-1}).

We set $\mathcal{G}_{E}=\{x \in \RR^d|\, V(x)\le E\}$ and 
$\mathcal{G}_{E, \delta}=\{x \in \RR^d|\, \mathrm{dist}(x, \mathcal{G}_{E})<\delta\}$ 
($\mathrm{dist}(\bcdot, \bcdot)$ is the usual Euclidean distance). 
We set $\mathcal{G}_{E, \delta}^c=\RR^d \setminus \mathcal{G}_{E, \delta}$. 
Denote the space of smooth functions which are bounded with their all derivatives by $\Cinf_b(\RR^d)$. 

\begin{asm}\label{asm-Agmon}
The potential $V$ belongs to $\Cinf_b(\RR^d; \RR)$ and there exists $E\in \RR$ such that 
$\inf_{x\in \mathcal{G}_{E, \delta}^c}V(x)>E$ for any $\delta>0$. 
\end{asm}
We note that the compactness of $\mathcal{G}_{E}$ is not assumed. 
We set (\cite{KR1})
\[
L(x, v)=\sup_{\xi\in K_x}\langle \xi, v \rangle,  
\]
where 
\[
K_x=\bigl\{\xi \in \RR^d \bigm|\, \sum_{j=1}^d \sinh^2\frac{\xi_j}{2} \le 
\frac{(V(x)-E)_+}{4}\bigr\}.
\]
Here $(\boldsymbol{\cdot})_+=\max\{\boldsymbol{\cdot}, 0\}$. 
We call $L(x, v)$ or $L: T\RR^d \to [0, \infty)$ 
the Agmon-Finsler metric for discrete Schr\"{o}dinger operators. 
This gives the length of $v \in T_x \RR^d=\RR^d$ in this metric. 
Let $d_E(x, y)$ be the (pseudo-)distance between $x, y\in \RR^d$ induced from $L(x, v)$ 
(see Section~3.2 for details). 
Set 
\[
d_{E}(x)=\inf_{y\in \mathcal{G}_E}d_E(x, y). 
\] 

We state the semiclassical Agmon estimate for discrete Schr\"{o}dinger operators.
\begin{thm}\label{thm-2}
Under Assumption~\ref{asm-Agmon} and the above notation, 
for any $C_0>0$, $\delta_0>0$ and $\ve>0$, 
there exist $C>0$, $h_0>0$, $0<\delta<\delta_0$, $\chi,\, \tilde{\chi}\in \Cinf_b(\RR^d; [0, 1])$ 
with 
\[
\supp (1-\chi)\subset \mathcal{G}_{E, \delta}, \hspace{0.2cm}
\supp \tilde{\chi}\subset \mathcal{G}_{E, \delta}\setminus \mathcal{G}_{E, \delta/2}
\]
and $\rho \in \Cinf(\RR^d; \RR_{\ge 0})$ with 
\[
|\rho(x)-(1-\ve)d_E(x)|\le\ve \hspace{0.2cm}\text{for} 
\hspace{0.2cm} x \in \RR^d
\]
such that for $0<h<h_0$,  
\[
\|\chi e^{\rho(x)/h}u\|_{\ell^2}\le C\|\tilde{\chi} u\|_{\ell^2}
+C\|\chi e^{\rho(x)/h}(H(h)-z)u\|_{\ell^2}
\]
for any $u\in \ellhd$ and any $z\in [E-C_0, E+C_0 h]+i[-C_0, C_0]$. 
\end{thm}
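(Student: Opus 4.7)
The plan is to follow the classical Agmon strategy: conjugate $H(h)$ by the exponential weight $e^{\rho/h}$, show that the resulting operator has positive real part on the support of $\chi$, and then extract the weighted $\ell^2$ estimate by a standard energy/commutator argument. The novelty compared to the continuous case is that the discrete Laplacian, after conjugation by an exponential weight, produces hyperbolic rather than quadratic expressions, which is precisely what forces the Finsler geometry encoded in the balls $K_x$.

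I would begin by constructing the smooth weight $\rho$. Start from the Finsler distance $d_E$, which is Lipschitz and satisfies $\nabla d_E(x)\in K_x$ almost everywhere (the Eikonal inequality for the dual Finsler norm). A standard mollification plus truncation produces $\rho\in \Cinf(\RR^d;\RR_{\ge 0})$ with
\[
(1-\ve)d_E(x)-\ve\le \rho(x)\le (1-\ve)d_E(x)+\ve
\]
and, crucially, the pointwise gradient bound
\[
\sum_{j=1}^d \sinh^2\frac{\partial_j\rho(x)}{2}\le \frac{(1-\ve)^2}{4}(V(x)-E)_+ + O(\ve),
\]
perhaps after shrinking $\ve$ and absorbing error terms. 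The cutoffs $\chi$ and $\tilde\chi$ are chosen adapted to $\mathcal{G}_{E,\delta/2}\subset \mathcal{G}_{E,\delta}$ in the usual way.

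Next I would compute the conjugated operator $H_\rho(h)=e^{\rho/h}H(h)e^{-\rho/h}$ \emph{exactly}, exploiting the fact that $H(h)$ is a finite sum of shifts by $\pm h e_j$. The conjugation of the shift $S_j:u(x)\mapsto u(x+he_j)$ multiplies it by $e^{-(\rho(x+he_j)-\rho(x))/h}=e^{-\partial_j\rho(x)}+O(h)$, so after Fourier transform the principal symbol of $H_\rho(h)$ is
\[
p_\rho(\xi,x)=\sum_{j=1}^d\bigl[2-2\cos\xi_j\cosh(\partial_j\rho(x))+2i\sin\xi_j\sinh(\partial_j\rho(x))\bigr]+V(x),
\]
modulo a symbol of order $h$ controllable by the pseudodifferential calculus on $\TT^d$ from Section~2. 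Using the elementary inequality $2-2\cos\xi_j\cosh\eta_j\ge 2-2\cosh\eta_j=-4\sinh^2(\eta_j/2)$, the Hermitian part satisfies
\[
\operatorname{Re}p_\rho(\xi,x)\ge V(x)-\sum_{j=1}^d 4\sinh^2\frac{\partial_j\rho(x)}{2}\ge V(x)-(1-\ve)^2(V(x)-E)_+ - O(\ve),
\]
which on $\supp\chi\subset\mathcal{G}_{E,\delta/2}^c$ exceeds $E+c(\delta)-O(\ve)$ for some $c(\delta)>0$ by Assumption~\ref{asm-Agmon}. For $z\in[E-C_0,E]+i[-C_0,C_0]$ and $\ve$ small, this gives $\operatorname{Re}(p_\rho-z)\ge c'>0$ on $\supp\chi$.

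The estimate then follows by testing: writing $w=\chi e^{\rho/h}u$ and applying sharp G\aa rding's inequality on $\TT^d$ to the symbol $\chi^2(x)(\operatorname{Re}p_\rho-\operatorname{Re}z-c'/2)$ yields
\[
c'\|w\|_{\ell^2}^2\le 2\operatorname{Re}\langle \chi^2(H_\rho-z)e^{\rho/h}u,e^{\rho/h}u\rangle + O(h)\|w\|_{\ell^2}^2 + R,
\]
where $R$ collects the commutators $[H_\rho,\chi]$ and analogous remainder terms. By the finite-range nature of $H(h)$, those commutators are supported in an $h$-neighborhood of $\supp\nabla\chi\subset \mathcal{G}_{E,\delta}\setminus\mathcal{G}_{E,\delta/2}$, where $e^{\rho/h}$ is uniformly bounded; hence $R$ is dominated by $C\|\tilde\chi u\|_{\ell^2}^2$. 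The first term is $2\operatorname{Re}\langle \chi e^{\rho/h}(H-z)u,\chi e^{\rho/h}u\rangle$, which by Cauchy--Schwarz produces the claimed inequality after absorbing an $\eta\|w\|_{\ell^2}^2$ term on the left.

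The main obstacle will be the pseudodifferential calculus for the conjugated operator, since $\rho/h$ is a weight of order $h^{-1}$ and cannot be treated as a bounded symbol. I expect to circumvent this by performing the conjugation of each shift $S_j$ \emph{exactly} (as a composition of an exponential multiplier and a shift) and developing the calculus only for the resulting bounded symbols with parameter-dependent coefficients like $\cosh(\partial_j\rho)$ and $\sinh(\partial_j\rho)$; combined with the Finsler regularization of $d_E$, this is where the technical weight of the argument lies.
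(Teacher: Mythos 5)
Your overall architecture is the same as the paper's (conjugation, positivity of the real part of the conjugated symbol, G\aa rding, commutator absorption), and your route to the conjugated symbol is a genuine alternative: you conjugate the finite-range operator $H(h)$ exactly on the lattice side, so that each shift $S_j^{\pm}$ picks up the explicit multiplier $e^{(\rho(x)-\rho(x\pm he_j))/h}$, whose derivatives are controlled in terms of $\partial\rho$ alone. This neatly sidesteps the paper's contour-deformed oscillatory integral (Lemma~\ref{lem-symbol}) and, since only differences of $\rho$ enter, it also lessens the need for the paper's $\rho_M$ truncation; that trade is a real simplification. (You should still say a word about why the a~priori estimate extends from compactly supported $u$ to all of $\ellhd$ when $\rho$ is unbounded, but that is routine.)

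There is, however, a genuine gap in your commutator step. You assert that on $\supp\nabla\chi\subset \mathcal{G}_{E,\delta}\setminus\mathcal{G}_{E,\delta/2}$ the weight $e^{\rho/h}$ is uniformly bounded, so that $R\lesssim\|\tilde\chi u\|^2$. But your $\rho$ is built by mollifying $(1-\ve)d_E$, and $d_E=d_{E,\mathcal{G}_E}$ is strictly positive on $\mathcal{G}_{E,\delta}\setminus\mathcal{G}_E$; hence $\rho\ge c_\delta>0$ on $\supp\nabla\chi$ and $e^{\rho/h}\ge e^{c_\delta/h}\to\infty$ as $h\to 0$. Shrinking $\delta$ makes $c_\delta$ small but never zero, so for fixed $\delta$ the commutator bound degenerates exponentially and the theorem's right-hand side $C\|\tilde\chi u\|$ (with $h$-independent $C$ and no weight) is not obtained. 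The paper's fix is precisely to base $\rho$ on the Finsler distance $d_{E,\mathcal{G}_{E,\delta}}$ to the enlarged set $\mathcal{G}_{E,\delta}$ rather than to $\mathcal{G}_E$: this distance vanishes identically on $\mathcal{G}_{E,\delta}$, so after mollification with a mollifier supported in a ball of radius $\delta/30$ one gets $\rho\equiv 0$ on a neighborhood of $\supp\nabla\chi$ (indeed $\mathrm{dist}(\supp\rho,\supp\partial\chi)>\delta/10$), and since $d_{E,\mathcal{G}_{E,\delta}}\le d_E\le d_{E,\mathcal{G}_{E,\delta}}+O(\delta)$ the comparison $(1-\ve)d_E-\ve\le\rho\le(1-\ve)d_E+\ve$ still holds. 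You should replace your construction of $\rho$ by this one; your computation of the conjugated symbol and the G\aa rding step then go through unchanged, and the commutator term becomes $\|e^{\rho/h}[H(h),\chi]u\|=\|[H(h),\chi]u\|\le C\|\tilde\chi u\|$ as claimed.
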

We note that Theorem~\ref{thm-2} also 
provides the exponential decay at infinity of eigenfunctions when $\mathcal{G}_E$ is bounded 
though it is valid only for small $h>0$. The case of $h=1$ is discussed below. 

For $d=1$, the Agmon estimate for $H(h)$ was proved in \cite{HS} using the 
Agmon-type Riemannian metric
\[
ds_E=2\mathrm{arsinh}\frac{\sqrt{(V(x)-E)_+}}{2}ds, 
\]
where $ds$ is the length of the standard metric on $\RR$. 
In higher dimensions, Klein-Rosenberger~\cite{KR1}\,\cite{KR3} introduced the same Finsler metric 
and proved the Agmon estimate in the case of potential minimums, 
where $\mathcal{G}_E$ consists of finite points. 
We allow general $\mathcal{G}_E$, which is possibly unbounded. 
The strategy of the proof in \cite{KR1} 
is similar to that in Dimassi-Sj\"{o}strand~\cite[Section~6]{DS} while 
our proof is similar to that in Nakamura~\cite{N} and is more microlocal. 
Rabinovich~\cite{Ra} also studied the same semiclassical setting for general $d$ 
and proved the Agmon estimate though the relation with Finsler metric is not discussed in \cite{Ra}. 

We note that Klein-Rosenberger~\cite{KR2} 
constructed WKB solutions for the eigenfunction problem of $H(h)$ near 
a nondegenerate potential minimum in terms of the Agmon-Finsler metric, 
which shows that this metric is the natural notion for estimating the tunneling effect for $H(h)$.

We next prove the Agmon estimate and the optimal anisotropic exponential decay of eigenfunctions 
for the non-semiclassical discrete Schr\"{o}dinger operator 
\[
Hu(x)=-\sum_{|x-y|=1}(u(y)-u(x))+V(x)u(x), 
\]
where $x, y\in \mathbb{Z}^d$. Namely, we set $H=H(1)$. 
\begin{asm}\label{asm-Agmon2}
The potential $V: \ZZ^d \to \RR$ has a smooth extension $\widetilde{V}: \RR^d\to \RR$ with the following 
properties.  
There exists $0<\theta\le 1$ such that 
\begin{equation}\label{theta}
|\partial^{\alpha}\widetilde{V}(x)|\le C_{\alpha}(1+|x|)^{-\theta |\alpha|}
\end{equation} 
for any $\alpha \in \ZZ^d_{\ge 0}$ and $\underline{\lim}_{|x|\to \infty}\widetilde{V}(x)\ge 0$. 
\end{asm}

Note that any $V\in \ell^{\infty}_{\mathrm{comp}}(\ZZ^d)$ satisfies the Assumption~\ref{asm-Agmon2}. 
We fix $E<0$. 
We write $\widetilde{V}=V$ without confusion. 
We note that a necessary and sufficient condition for the existence of an 
extension $V:\RR^d \to \RR$ of $V: \ZZ^d\to \RR$ satisfying (\ref{theta}) is given by 
Nakamura~\cite[Lemma~2.1]{N2}. 
Although the case of $\theta=1$ is discussed in \cite{N2}, the case of $0<\theta<1$ is similar.

We set $q(\xi)=4\sum_{j=1}^d \sinh^2\frac{\xi_j}{2}$. 
We also define the Gauss map $G_E:\partial K^E \to \mathbb{S}^{d-1}$ 
of $K^E=\{\xi\in \RR^d |\, q(\xi)\le|E|\}$ by $G_E(x)=\partial q(\xi)/|\partial q(\xi)|$ 
for $\xi\in \partial K^E$. 
This is bijective since $K^E$ is convex and the Gaussian curvature of $\partial K^E$ does not vanish. 
We set 
\[
\rho_E(x)=\sup_{\xi \in K^E}\langle x, \xi \rangle=x\cdot G_E^{-1}\left(\frac{x}{|x|}\right). 
\] 

\begin{thm}\label{thm-3}
Under Assumption~\ref{asm-Agmon2} and the above notation, 
for any $C_0>0$ and $\ve>0$ there exist $C>0$ and 
$1-\chi,\, \tilde{\chi}\in \ell_{\mathrm{comp}}^{\infty}(\ZZ^d)$ 
such that
\[
\|\chi e^{(1-\ve)\rho_E(x)}u\|_{\ell^2}\le C\|\tilde{\chi} u\|_{\ell^2}
+C\|\chi e^{(1-\ve)\rho_E(x)}(H-z)u\|_{\ell^2}
\]
for any $u\in \ell^2(\ZZ^d)$ and any $z\in [E-C_0, E]+i[-C_0, C_0]$. 
\end{thm}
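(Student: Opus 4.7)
The plan is to mirror the proof of Theorem~\ref{thm-2} at fixed $h=1$, using the symbol decay hypothesis (\ref{theta}) so that $\langle x\rangle^{-\theta}$ plays the role of a semiclassical small parameter on the support of the cutoff. The weight $\rho_E$ is positively homogeneous of degree one, Lipschitz, and smooth away from the origin, since $\partial K^E$ has non-vanishing Gaussian curvature and hence the Gauss map $G_E$ is a diffeomorphism. Mollifying $(1-\ve)\rho_E$ near the origin yields $\rho\in \Cinf(\RR^d;\RR_{\ge 0})$ agreeing with $(1-\ve)\rho_E$ outside a compact set, satisfying the derivative bound $|\partial^\alpha\rho(x)|\le C_\alpha\langle x\rangle^{1-|\alpha|}$ needed to invoke the second part of Lemma~\ref{lem-symbol}, and with $\partial\rho(x)\in(1-\ve)\partial K^E$, a set strictly contained in $K^E$ since $0\in\mathrm{int}(K^E)$; by compactness one obtains $q(\partial\rho(x))\le (1-\eta)(-E)$ for some $\eta>0$ and all $|x|\ge 1$. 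I also fix $\chi,\chi_1,\tilde\chi\in \Cinf_b(\RR^d;[0,1])$ with $1-\chi$, $1-\chi_1$, $\tilde\chi$ compactly supported, $\chi\chi_1=\chi$, $\tilde\chi\,\partial\chi=\partial\chi$, and $\chi\equiv 1$ outside a ball $\{|x|\ge R\}$ to be chosen.

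After introducing the truncation $\rho_M$ and passing to $L^2(\TT^d)$ via Fourier, Lemma~\ref{lem-symbol} applies uniformly in $M>1$ and places $\widetilde{H}_M=e^{\rho_M(D_\xi)}\widetilde H e^{-\rho_M(D_\xi)}\in \mathrm{Op}\,S^0_{\theta,0}$ with principal symbol $p(\xi-i\partial\rho_M(x),x)$ modulo $S^{-\theta}_{\theta,0}$. For $z\in[E-C_0,E]+i[-C_0,C_0]$, the inequality (\ref{realpart}), together with $\mathrm{Re}\,z\le E$ and the above bound on $q(\partial\rho_M)$, gives
\[
\mathrm{Re}\bigl(p(\xi-i\partial\rho_M(x),x)-z\bigr)\ge V(x)-E-(1-\eta)(-E)=V(x)-\eta E.
\]
Since $\underline{\lim}_{|x|\to\infty}V(x)\ge 0$, choosing $R$ sufficiently large forces $V(x)-\eta E\ge -\eta E/2>0$ on $\supp\chi_1$, and hence $|\sigma(\widetilde{H}_M)-z|^2\ge\gamma^2$ there for some $\gamma>0$ independent of $M$ and of $z$ in the strip.

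Sharp G\r{a}rding in $\mathrm{Op}\,S^0_{\theta,0}$ applied to $\chi_1(D_\xi)\bigl(|\widetilde H_M-z|^2-\gamma^2\bigr)\chi_1(D_\xi)$ produces an error in $\mathrm{Op}\,S^{-\theta}_{\theta,0}$, and enlarging $R$ once more lets this error be dominated by $\gamma^2/4$, yielding
\[
\|(\widetilde H_M-z)\chi_1(D_\xi)\hat u\|\ge\gamma\|\chi_1(D_\xi)\hat u\|-\tfrac{\gamma}{2}\|\hat u\|
\]
uniformly in $\hat u$, $M$, and $z$. Substituting $\hat u\mapsto\chi(D_\xi)\hat u$ and using $\chi_1\chi=\chi$ gives $\ge \tfrac{\gamma}{2}\|\chi(D_\xi)\hat u\|$. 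Undoing the Fourier transform, replacing $u$ by $e^{\rho_M}u$, and sending $M\to\infty$ yields $\|e^{\rho}(H-z)\chi u\|_{\ell^2}\ge \tfrac{\gamma}{2}\|e^{\rho}\chi u\|_{\ell^2}$. The commutator $[H,\chi]u(x)=-\sum_{|x-y|=1}(\chi(y)-\chi(x))u(y)$ is supported in the $1$-neighbourhood of $\supp\partial\chi$, where $\tilde\chi\equiv 1$ and $\rho$ is bounded, so $\|e^{\rho}[H,\chi]u\|_{\ell^2}\le C\|\tilde\chi u\|_{\ell^2}$. Restricting $\chi,\tilde\chi$ to $\ZZ^d$ then gives the statement, since $\rho=(1-\ve)\rho_E$ outside a compact set.

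The main obstacle is obtaining an absorbable G\r{a}rding error at fixed $h=1$: without a semiclassical parameter the standard inequality yields only an $L^2$-bounded remainder. This is why the proof is carried out entirely in the calculus $\mathrm{Op}\,S^0_{\theta,0}$ provided by Assumption~\ref{asm-Agmon2}: the decay exponent $\theta>0$ lets $\langle x\rangle^{-\theta}$ act as an effective semiclassical parameter, which can be made arbitrarily small on $\supp\chi_1$ by enlarging $R$.
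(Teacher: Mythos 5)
Your proof follows the paper's strategy very closely: modify $\rho_E$ near the origin and verify the derivative bounds that activate the second part of Lemma~\ref{lem-symbol}, introduce the truncations $\rho_M$, obtain uniform-in-$M$ membership in $\mathrm{Op}\,S^0_{\theta,0}$, use $\underline{\lim}_{|x|\to\infty}V\ge0$ to make the principal symbol elliptic outside a large ball, apply sharp G\r{a}rding, absorb the error using the decay $\langle x\rangle^{-\theta}$, pass $M\to\infty$, and finish with the commutator $[H,\chi]$. All of this matches the paper.

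The one place where your presentation departs from the paper's, and where a bit more care is needed, is the absorption of the G\r{a}rding remainder. You claim that the error operator lies in $\mathrm{Op}\,S^{-\theta}_{\theta,0}$ and that ``enlarging $R$ lets this error be dominated by $\gamma^2/4$,'' thereby deriving $\|(\widetilde H_M-z)\chi_1(D_\xi)\hat u\|\ge\gamma\|\chi_1(D_\xi)\hat u\|-\frac{\gamma}{2}\|\hat u\|$ \emph{before} substituting $\hat u\mapsto\chi(D_\xi)\hat u$. The standard statement of sharp G\r{a}rding in this class gives $-C\|\hat u\|^2_{H^{-\theta/2}}$ with $C$ controlled by the symbol seminorms, which does not shrink simply by moving the cutoff $\chi_1$ outward; to make the error small relative to the main term one needs to know that the error \emph{operator} inherits the support localization of $\chi_1$, which requires an argument beyond the usual formulation. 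The paper avoids this entirely by first replacing $\hat u$ by $\chi(D_\xi)\hat u$ so that the G\r{a}rding remainder becomes $C\|\chi(D_\xi)\hat u\|^2_{H^{-\theta/2}}\le C R^{-\theta}\|\chi(D_\xi)\hat u\|^2_{L^2}$, which is then absorbed for $R$ large. Since your very next line performs that same substitution, your chain of inequalities lands in the right place; but as written the intermediate inequality for general $\hat u$ is not justified. Reordering to match the paper (localize first, then absorb) removes the issue with no loss.
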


\begin{coro}\label{coro1}
Under Assumption~\ref{asm-Agmon2} and the above notation, 
if $(H-E)u=0$ and $u \in \ell^2(\ZZ^d)$, then 
for any $\ve>0$ there exists $C_{\ve}>0$ such that 
\[
|u(x)|\le C_{\ve}e^{-(1-\ve)\rho_E(x)}
\]
for any $x\in \ZZ^d$.
\end{coro}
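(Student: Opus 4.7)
The plan is to deduce the pointwise bound from Theorem~\ref{thm-3} by exploiting the fact that on a discrete space $\ZZ^d$ the $\ell^\infty$ norm is trivially dominated by the $\ell^2$ norm pointwise (no Sobolev embedding is needed, unlike in the continuous case).

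Fix $\ve > 0$. I would apply Theorem~\ref{thm-3} with the given $\ve$ and any convenient $C_0 > 0$, taking $z = E$. Since $(H-E)u = 0$, the term $\|\chi e^{(1-\ve)\rho_E(x)}(H-z)u\|_{\ell^2}$ vanishes, and since $\tilde{\chi}$ has compact support we have $\|\tilde{\chi}u\|_{\ell^2} \le \|u\|_{\ell^2} < \infty$. Hence Theorem~\ref{thm-3} yields
\[
\|\chi e^{(1-\ve)\rho_E(x)}u\|_{\ell^2(\ZZ^d)} \le C\|u\|_{\ell^2(\ZZ^d)}.
\]
Next, for any single $x \in \ZZ^d$ the trivial estimate
\[
\chi(x)^2 e^{2(1-\ve)\rho_E(x)}|u(x)|^2 \le \|\chi e^{(1-\ve)\rho_E(x)}u\|_{\ell^2}^2
\]
gives $\chi(x)|u(x)| \le C\|u\|_{\ell^2}e^{-(1-\ve)\rho_E(x)}$. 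Since $1-\chi \in \ell^\infty_{\mathrm{comp}}(\ZZ^d)$, the set $\{x \in \ZZ^d \mid \chi(x) < 1\}$ is finite, so outside this finite set we already have the desired bound. On the finite exceptional set, $\rho_E(x)$ is bounded and $|u(x)| \le \|u\|_{\ell^2}$, so the estimate holds after enlarging the constant $C_\ve$.

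There is essentially no obstacle here: the corollary is an immediate consequence of the Agmon estimate together with the discreteness of the underlying space. The only point to check carefully is that Theorem~\ref{thm-3} is indeed applicable at $z = E$ with a constant independent of where $u$ lives in $\ell^2$, which is guaranteed by the uniformity in $z \in [E-C_0, E] + i[-C_0, C_0]$ and by the compact support of $\tilde{\chi}$.
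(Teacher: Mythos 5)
Your proof is correct and is precisely the deduction the paper leaves implicit: apply Theorem~\ref{thm-3} at $z=E$ so the last term drops, note $\|\tilde{\chi}u\|_{\ell^2}<\infty$, and then use the trivial $\ell^\infty\hookrightarrow\ell^2$ pointwise bound on $\ZZ^d$ together with the finiteness of $\supp(1-\chi)$. Your remark that this replaces the Sobolev/elliptic-regularity step one would need in the continuous setting is exactly the right observation.
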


\begin{remark}\label{rem-Finsler1}
We note that $\rho_E(x)$ is the length of the line segment joining $0$ and $x$ 
with respect to the Agmon-Finsler metric $L(x, v)$ at energy $E$ for $V \equiv 0$. 
The geodesics with respect to this metric in this case are the straight lines 
since $L(x, v)$ is independent of $x$, and thus $\rho_E(x)$ coincides with $d_E(x, 0)$ for $V\equiv 0$
(see~\cite[Section 5.3, 6.6]{BCZ}).  
\end{remark}

Rabinovich-Roch~\cite{RR} proved the exponential decay of eigenfunctions for the discrete Schr\"{o}dinger 
operator with a slowly oscillating potential. 
In our notation, their exponential decay corresponds to $|u(x)|\le C_{\ve}e^{-(1-\ve)\rho(x)}$ with a condition on 
$\sup_j|\partial_{x_j}\rho(x)|$. 
Our condition $\partial \rho(x)\in K^E$ is more precise and is optimal 
as seen in Subsection~4.2.

The Agmon estimate was introduced by Agmon 
(see~\cite{A}). 
Our approach to Theorem~\ref{thm-2} and Theorem~\ref{thm-3}  
is similar to the arguments in~\cite{N}. 
Since we work in the Fourier space, we need to study the operator conjugated with the 
exponential of a Fourier multiplier and the calculations are more complicated than 
those in~\cite{N}. 
See~\cite{N} for the history of the semiclassical Agmon estimate for continuous 
Schr\"{o}dinger operators.

We note that the viewpoint of microlocal analysis on the torus for discrete Schr\"{o}dinger operators 
is also discussed in the context of long-range scattering theory (see~\cite{N2}~\cite{T}).

In Section 2, we recall basic facts about the microlocal analysis on the torus and present 
the Weyl law.  
In Section 3, we discuss the Agmon-Finsler metric for discrete 
Schr\"{o}dinger operators and prove Theorem~\ref{thm-2}. 
In Section~3, we prove the Agmon estimate for non-semiclassical discrete Schr\"{o}dinger operators 
and the optimality of this estimate.

\section{Preliminaries}
In this section, we recall basic facts on microlocal analysis on the torus.
We identify functions on $T^*\TT^d$ or $\TT^d$ with those on $T^*\RR^d$ or $\RR^d$ which are 
$2\pi\ZZ^d$-periodic. 
We recall the notation $\langle x \rangle=(1+x^2)^{1/2}$ and 
\[
S^m_{\theta, 0}(T^*\TT^d)=\{a(\bcdot; h)\in \Cinf(T^* \TT^d)|\, 
|\partial^{\alpha}_{\xi}\partial^{\beta}_x a(\xi, x; h)|
\le C_{\alpha, \beta}\langle x \rangle^{m-\theta|\beta|}\}.
\]
Here $\alpha$ and $\beta$ range over $\ZZ^d_{\ge 0}$. 
We write $S^m_{\theta, 0}=S^m_{\theta, 0}(T^*\TT^d)$, $S^m=S^m_{1, 0}$, 
 $S=S^0_{0, 0}$ and $S^{-\infty}=\bigcap_{m\in \RR}S^m$.
For $a\in S^m_{\theta, 0}$ and for $u\in C^{\infty}(\TT^d)$, 
we define
\[
a(\xi, hD_{\xi})u(\xi)=(2\pi h)^{-d}\int_{\RR^d}\int_{\RR^d}a(\xi, x)
e^{i\langle \xi-\eta, x\rangle/h}u(\eta)d\eta dx
\]
in the sense of oscillatory integral. 
The corresponding class of pseudodifferential operators is denoted by $\mathrm{Op}S^m_{\theta, 0}$. 
\begin{lemma}
$V(hD_{\xi})=\mathcal{F}_h V(x) \mathcal{F}_h^{-1}$ for $V\in \Cinf_b(\RR^d)$.
\end{lemma}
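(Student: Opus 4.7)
The plan is to verify the identity on the Fourier basis $\{e^{i\langle n, \xi\rangle}\}_{n\in \ZZ^d}$ of $L^2(\TT^d)$, then extend to general $u\in C^\infty(\TT^d)$ by Fourier expansion, using that $V\in \Cinf_b$ ensures both sides are bounded operators on $L^2(\TT^d)$.

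For the right-hand side, I would apply $\mathcal{F}_h^{-1}$ to $e^{i\langle n, \cdot\rangle}$. Evaluation at $x=hm\in h\ZZ^d$ yields $(2\pi)^{-d/2}\int_{\TT^d} e^{i\langle n-m, \xi\rangle}d\xi=(2\pi)^{d/2}\delta_{n,m}$, so the inverse transform is the sequence taking the value $(2\pi)^{d/2}$ at $hn$ and zero elsewhere. Multiplying pointwise by $V$ scales this mass to $(2\pi)^{d/2}V(hn)$, and applying $\mathcal{F}_h$ recovers $V(hn)e^{i\langle n, \xi\rangle}$. Hence $\mathcal{F}_h V \mathcal{F}_h^{-1}$ acts on the exponential basis as the Fourier multiplier $n\mapsto V(hn)$.

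For the left-hand side, I would insert $u=e^{i\langle n, \cdot\rangle}$ into the oscillatory-integral definition of $V(hD_\xi)$ and carry out the $\eta$-integration first. This produces the distributional factor $(2\pi)^d\delta(n-x/h)=(2\pi h)^d\delta(x-hn)$; the remaining $x$-integral then selects the value $V(hn)$, and the normalizations $(2\pi h)^{-d}(2\pi h)^d=1$ combine to give precisely $V(hn)e^{i\langle n, \xi\rangle}$, matching the right-hand side.

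The one delicate point is making rigorous sense of the double oscillatory integral, since $V\in \Cinf_b$ provides no decay in $x$. I would handle this in the standard pseudodifferential way: regularize the symbol as $V(x)\chi(\ve x)$ with $\chi\in \Cinf_c(\RR^d)$ and $\chi(0)=1$, repeat the computation above for the regularized symbol (where all integrals converge absolutely), and pass to the limit $\ve\to 0$ using integration by parts in $\eta$ against an operator such as $\langle hn-x\rangle^{-2}(1-h^2\Delta_\eta)$ to supply the uniform $x$-decay needed for dominated convergence. With this bookkeeping the two operators agree on every exponential, and the lemma follows by linearity and density.
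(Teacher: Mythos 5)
Your proof is correct and follows essentially the same route as the paper: both carry out the $\eta$-integration first to produce a Dirac comb on $h\ZZ^d$ weighted by the values of $\mathcal{F}_h^{-1}u$, after which the $x$-integral collapses to the sum defining $\mathcal{F}_h V\mathcal{F}_h^{-1}u$. The paper performs this in one stroke for a general $u\in C^\infty(\TT^d)$ rather than mode-by-mode on the basis $e^{i\langle n,\xi\rangle}$, which is the same Fourier-series decomposition you invoke; your explicit regularization of the oscillatory integral is a reasonable rigor step that the paper leaves implicit in the phrase ``in the sense of oscillatory integral.''
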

\begin{proof}
We have 
\begin{align*}
V(hD_{\xi})u(\xi)
&=(2\pi h)^{-d}\int_{\RR^d}\int_{\RR^d}V(x)e^{i\langle \xi-\eta, x\rangle/h}u(\eta)d\eta dx \\
&=(2\pi h)^{-d}\int_{\RR^d}V(x)\left(\sum_{x\in h\ZZ^d}(2\pi)^{d/2}h^d (\mathcal{F}_h^{-1}u)(x)
\delta_x\right)e^{i\langle \xi, x \rangle/h}dx\\
&=(2\pi)^{-d/2}\sum_{x\in h\ZZ^d}V(x)(\mathcal{F}_h^{-1}u)(x)e^{\langle\xi, x \rangle/h}
\end{align*}
for $u \in \Cinf(\TT^d)$, which completes the proof.
\end{proof}

Although we use the spacial structure of the torus to define $a(\xi, hD_{\xi})$, 
we can employ the general theory of pseudodifferential operators on manifolds 
including the functional calculus and the trace formula for pseudodifferential operators 
(see~\cite[Chapter 5, 14]{Z}). 
To illustrate these, we give the Weyl law. 
\begin{prop}\label{thm-1}
Assume that $V\in \Cinf(\RR^d; \RR)$, $\underline{\lim}_{|x|\to \infty}V(x)\ge 0$ 
and there exists $0<\theta\le 1$ such that 
\begin{align*}
|\partial^{\alpha}V(x)|\le C_{\alpha}(1+|x|)^{-\theta |\alpha|}
\end{align*} 
for any $\alpha \in \ZZ^d_{\ge 0}$. 
Then for any fixed $a<b<0$, the number $N_{[a, b]}(h)$ of eigenvalues of $H(h)$ in $[a, b]$ satisfies 
\[
N_{[a, b]}(h)=(2\pi h)^{-d}\mathrm{Vol}(\{ (\xi, x)\in T^*\mathbb{T}^d |\, a \le p(\xi, x) \le b\})
+o(h^{-d})
\]
when $h\to 0$.
\end{prop}
\begin{proof}[Proof of Proposition~\ref{thm-1}]
Take small $\varepsilon>0$ and $\chi_{1, \varepsilon}, \chi_{2, \varepsilon}\in C_c^{\infty}(\RR; [0, 1])$ 
such that $\chi_{1, \ve}=1$ on $[a-\ve, b+\ve]$, $\supp\chi_{1, \ve}\subset [a-2\ve, b+2\ve]$, 
$\chi_{2, \ve}=1$ on $[a+2\ve, b-2\ve]$ and $\supp\chi_{2, \ve}\subset [a+\ve, b-\ve]$. 
Then we have
\[
\mathrm{tr} (\chi_{2, \varepsilon}(\widetilde{H}(h)))\le N_{[a, b]}(\widetilde{H}(h))
\le \mathrm{tr} (\chi_{1, \varepsilon}(\widetilde{H}(h)))
\]
since $N_{[a, b]}(\widetilde{H}(h))=\mathrm{tr} (\chi_{[a, b]}(\widetilde{H}(h)))$ 
and $\chi_{2, \varepsilon}\le \chi_{[a, b]} \le \chi_{1, \varepsilon}$.

The functional calculus and the trace formula for pseudodifferential operators imply that 
\[
\mathrm{tr} (\chi_{j, \varepsilon}(\widetilde{H}(h)))
=(2\pi h)^{-d}\int_{T^*\TT^d}\chi_{j, \varepsilon}(p(\xi, x))d\xi dx+\mathcal{O}_{\ve}(h^{-d+1})
\]
for $j=1, 2$. 
We note that $\mathrm{Vol}_{2d}(\{(\xi, x)|\, p(\xi, x)=a, b\})=0$, which follows from Fubini's theorem 
and the definition of $p(\xi, x)$. 
Then we have 
\[
\lim_{\ve \to 0}\int_{T^*\TT^d}\chi_{j, \varepsilon}(p(\xi, x))d\xi dx
=\mathrm{Vol}(\{ (\xi, x)\in T^*\mathbb{T}^d |\, a \le p(\xi, x) \le b\})
\]
for $j=1, 2$. 

Take any $\delta>0$. Then for sufficiently small $\ve>0$, the above arguments imply that 
\[
-\delta-\mathcal{O}_{\ve}(h)\le (2\pi h)^d N_{[a, b]}(\widetilde{H}(h))
-\mathrm{Vol}(\{ (\xi, x)|\, a \le p(\xi, x) \le b\}) 
\le \delta+\mathcal{O}_{\ve}(h).
\]
Taking $h\to 0$ and then taking $\delta \to 0$, the proof is finished.
\end{proof}
The proof followed the standard strategy (see~\cite{DS}).

\section{Semiclassical Agmon estimate}
In this section, we prove Theorem~\ref{thm-2}.
\subsection{Calculation of exponentially conjugated operator}
Take any $\rho \in \Cinf_b(\RR^d; \RR)$. 
We compute $\widetilde{H}_{\rho}(h)=e^{\rho(hD_{\xi})/h}\widetilde{H}(h)e^{-\rho(hD_{\xi})/h}$. 
Since we have $e^{\rho(hD_{\xi})/h}V(hD_{\xi})e^{-\rho(hD_{\xi})/h}=V(hD_{\xi})$, 
we only have to consider $e^{\rho(hD_{\xi})/h}p_0(\xi)e^{-\rho(hD_{\xi})/h}$, 
where $p_0(\xi)=\sum_{j=1}^d (2-2\cos\xi_j)$. 

\begin{lemma}\label{lem-symbol}
For $\rho \in \Cinf_b(\RR^d; \RR)$, 
\[
e^{\rho(hD_{\xi})/h}p_0(\xi)e^{-\rho(hD_{\xi})/h}=a_{\rho}(\xi, hD_{\xi}; h)\in \mathrm{Op}S, 
\]
where $a_{\rho}\sim \sum_{k=0}^{\infty}h^k a_{\rho, k}(\xi, x)$ with $a_{\rho, k}\in S$ and 
\[
a_{\rho, 0}(\xi, x)=p_0(\xi-i\partial \rho(x), x).
\]
If moreover 
\begin{equation}\label{asm-rho}
|\partial^{\alpha}_x \rho(x)|\le C_{\alpha}\langle x \rangle^{1-|\alpha|}
\hspace{0.2cm}\text{for any}\hspace{0.2cm}\alpha \in \ZZ^d_{\ge 0},
\end{equation}
then $a_{\rho}\in S^0$ and $a_{\rho, k}\in S^{-k}$. 
\end{lemma}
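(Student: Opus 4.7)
The plan is to pass to the physical side via $\mathcal{F}_h$, where $e^{\rho(hD_\xi)/h}$ becomes multiplication by $e^{\rho(x)/h}$, and to decompose $p_0(\xi) = \sum_{j=1}^d(2 - e^{i\xi_j} - e^{-i\xi_j})$ so that the conjugation reduces to treating each factor $e^{\pm i\xi_j}$ separately. On the physical side $e^{\pm i\xi_j}$ acts as the shift $T_j^\pm u(x) = u(x \mp he_j)$, so a direct computation gives
\[
e^{\rho/h}\,T_j^\pm\, e^{-\rho/h}\, u(x) = \phi_j^\pm(x;h)\,u(x\mp he_j), \qquad \phi_j^\pm(x;h) := e^{(\rho(x)-\rho(x\mp he_j))/h},
\]
which back on the Fourier side reads as the composition $\phi_j^\pm(hD_\xi)\circ e^{\pm i\xi_j}$ of a Fourier multiplier with a multiplication operator.

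To extract the symbol in the paper's left quantization convention I would use that the first factor has a symbol depending only on $x$ and the second only on $\xi$: a direct change of variables in the oscillatory integral representation yields the \emph{exact} symbol
\[
a_j^\pm(\xi, x;h) = e^{\pm i\xi_j}\, e^{(\rho(x\pm he_j)-\rho(x))/h},
\]
and hence $a_\rho(\xi, x;h) = \sum_{j=1}^d(2 - a_j^+ - a_j^-)$. Taylor expanding the exponent in $h$, substituting into the outer exponential, and collecting powers of $h$ produces a formal series $a_\rho \sim \sum_{k\ge 0} h^k a_{\rho,k}$ where each $a_{\rho,k}$ is a universal polynomial in $e^{\pm i\xi_j}$, $e^{\pm\partial_j\rho(x)}$, and derivatives of $\rho$. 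The $h^0$ coefficient reads
\[
a_{\rho,0} = \sum_{j=1}^d \bigl(2 - e^{i\xi_j+\partial_j\rho(x)} - e^{-i\xi_j-\partial_j\rho(x)}\bigr) = \sum_{j=1}^d\bigl(2-2\cos(\xi_j-i\partial_j\rho(x))\bigr) = p_0(\xi - i\partial\rho(x)),
\]
matching the claim.

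What remains is verifying the symbol class memberships, which I would handle by combining the mean value theorem with Faà di Bruno applied to $\exp$. Under $\rho\in\Cinf_b$ the exponents and all their $x$-derivatives are uniformly bounded in $h$ (apply the mean value theorem to $(\rho(x)-\rho(x\mp he_j))/h$ and to its derivatives), so $a_\rho, a_{\rho,k}\in S$ uniformly, and a Taylor remainder bound at every order gives $a_\rho\sim\sum h^k a_{\rho,k}$ in $S$. The refinement under \eqref{asm-rho} is the most delicate step: each $\partial_x^\beta$ applied to $(\rho(x\pm he_j)-\rho(x))/h$ reduces via the mean value theorem to $\partial^{\beta+e_j}\rho(x')$ for some $x'$ with $|x-x'|\le h$, which \eqref{asm-rho} bounds by $C_\beta\langle x\rangle^{-|\beta|}$; Faà di Bruno then lifts each $a_j^\pm$, hence $a_\rho$, into $S^0$. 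For $a_{\rho,k}$ with $k\ge 1$, the Taylor expansion organizes the coefficient as a sum of products $\prod_i \partial^{l_i}\rho$ with $l_i\ge 2$ and $\sum_i(l_i-1)=k$, so \eqref{asm-rho} yields $|a_{\rho,k}|\le C_k\langle x\rangle^{-k}$ and the analogous decay on derivatives, giving $a_{\rho,k}\in S^{-k}$. The main obstacle is the combinatorial bookkeeping in this last step, but it follows routinely once the Taylor expansion of the exponent is in place.
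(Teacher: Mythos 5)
Your proof is correct, and it takes a genuinely different and more elementary route than the paper's. The paper computes $e^{\rho(hD_{\xi})/h}p_0(\xi)e^{-\rho(hD_{\xi})/h}$ by representing the conjugated operator as a $4d$-dimensional oscillatory integral, regularizing with Gaussian weights to justify a contour deformation (absorbing the $e^{\pm\rho/h}$ factors into a complex shift of the frequency variable), and then applying integration by parts plus the stationary phase lemma to extract the asymptotic expansion — with a further rescaling $y\mapsto\langle x\rangle y$, $\tilde h=h\langle x\rangle^{-1}$ to obtain the refined $S^{-k}$ membership. Your approach instead exploits the algebraic structure specific to $p_0$: since the discrete Laplacian is a finite sum of shifts, the conjugation on the physical side is an explicit pointwise computation, and because $e^{\pm i\xi_j}$ is a complex exponential, the composition $\phi_j^\pm(hD_\xi)\circ e^{\pm i\xi_j}$ has an \emph{exact} left-quantization symbol obtained by an elementary change of variables. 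This buys you a closed-form symbol $a_\rho(\xi,x;h)=\sum_j\bigl(2 - e^{i\xi_j}e^{(\rho(x+he_j)-\rho(x))/h} - e^{-i\xi_j}e^{(\rho(x-he_j)-\rho(x))/h}\bigr)$ from which the asymptotic expansion, the leading term $p_0(\xi-i\partial\rho(x))$, and both symbol-class claims (via the integral form of the mean value theorem, Fa\`a di Bruno, and the combinatorics $\sum_i(l_i-1)=k$) follow by direct inspection — arguably a cleaner proof for this specific operator. The trade-off is generality: the paper's oscillatory-integral machinery would apply verbatim to any $p_0\in\Cinf(\TT^d)$ admitting suitable holomorphic extension, whereas your exact formula relies on $p_0$ being a trigonometric polynomial. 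One small point worth tightening if you write this up: to differentiate the exponent in $x$ you should use the integral form $(\rho(x\pm he_j)-\rho(x))/h=\int_0^1\partial_j\rho(x\pm the_j)\,dt$ rather than the pointwise mean value theorem (whose intermediate point depends non-smoothly on $x$); you allude to this but state it via the pointwise version.
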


\begin{proof}
We set $g(x)=e^{-|x|^2}$. 
Then we have 
\begin{align*}
e^{-\rho(hD_{\hat{\eta}})/h}u(\hat{\eta})
&=\lim_{\ve \to 0}(2\pi h)^{-d}\int_{\RR^{2d}}e^{i\langle \hat{\eta}-\eta, x\rangle/h}
  e^{-\rho(x)/h}u(\eta)g(\ve x)g(\ve \eta)dx d\eta\\
&=(2\pi h)^{-d}\int_{\RR^{2d}}e^{i\langle \hat{\eta}-\eta, x\rangle/h}\hspace{0.1cm} {}^{t}L_1^N 
  \bigl(e^{-\rho(x)/h}u(\eta)\bigr)dx d\eta, 
\end{align*}
where $N \ge 2d+1$ and
\[
L_1=\frac{1-x\cdot hD_{\eta}+(\hat{\eta}-\eta)\cdot hD_x}{1+|x|^2+|\eta-\hat{\eta}|^2}.
\]
Thus 
\begin{align*}
&e^{\rho(hD_{\xi})/h}p_0(\xi)e^{-\rho(hD_{\xi})/h}u(\xi)\\
&=(2\pi h)^{-2d}\int_{\RR^{2d}}e^{i\langle \xi-\hat{\eta}, y\rangle/h}
  \hspace{0.1cm} {}^{t}L_2^N e^{\rho(y)/h}p_0(\hat{\eta})
\int_{\RR^{2d}}e^{i\langle \hat{\eta}-\eta, x\rangle/h}\\ 
              &\hspace{6.5cm} {}^{t}L_1^{2N} 
  \bigl(e^{-\rho(x)/h}u(\eta)\bigr)dx d\eta dy d\hat{\eta}\\
&=\lim_{\ve \to 0} (2\pi h)^{-2d}\int_{\RR^{4d}}e^{i\langle \xi-\hat{\eta}, y\rangle/h}
  e^{\rho(y)/h}e^{i\langle \hat{\eta}-\eta, x\rangle/h}
  e^{-\rho(x)/h}u(\eta)\\
              &\hspace{6.2cm}g(\ve x)g(\ve \eta)g(\ve y)g(\ve \hat{\eta})
  dx d\eta dy d\hat{\eta}, 
\end{align*}
where 
\[
L_2=\frac{1-y\cdot hD_{\hat{\eta}}+(\xi-\hat{\eta})\cdot hD_y}{1+|y|^2+|\hat{\eta}-\xi|^2}.
\]
We set $\rho(y)-\rho(x)=(y-x)\cdot \Phi(x, y)$ with $\Phi(x, y)=\int_0^1 \partial \rho(y+t(x-y))dt$. 
We deform the integral and obtain 
\begin{align*}
&e^{\rho(hD_{\xi})/h}p_0(\xi)e^{-\rho(hD_{\xi})/h}u(\xi)\\
&=\lim_{\ve \to 0}(2\pi h)^{-2d}\int_{\RR^{4d}}
  e^{i\langle \xi-\hat{\eta}, y\rangle/h}e^{i\langle \hat{\eta}-\eta, x\rangle/h}
  p_0(\hat{\eta}-i\Phi(x,y))u(\eta)\\
   &\hspace{5.0cm}   g(\ve x)g(\ve \eta)g(\ve y)g(\ve \hat{\eta}-\ve i\Phi(x, y))
   d\eta dx d\hat{\eta} dy.
\end{align*}
Using ${}^{t}L_2^N$ and ${}^{t}L_1^{2N}$, we see that 
\begin{align*}
&e^{\rho(hD_{\xi})/h}p_0(\xi)e^{-\rho(hD_{\xi})/h}u(\xi)\\
&=\lim_{\ve \to 0}\lim_{\ve' \to 0}
  (2\pi h)^{-2d}\int_{\RR^{4d}}e^{i\langle \xi-\hat{\eta}, y\rangle/h}e^{i\langle \hat{\eta}-\eta, x\rangle/h}
  p_0(\hat{\eta}-i\Phi(x,y))u(\eta)\\
   &\hspace{5.0cm}   \psi(\ve x)\psi(\ve \eta)\psi(\ve' y)\psi(\ve' \hat{\eta})
   d\eta dx d\hat{\eta} dy\\
&=\lim_{\ve \to 0}\lim_{\ve' \to 0}
  (2\pi h)^{-2d}\int_{\RR^{4d}}e^{i \langle \xi-\eta, x \rangle/h}
  e^{-i\langle y, \hat{\eta}\rangle/h}p_0(\hat{\eta}+\xi-i\Phi(x,y+x))u(\eta)\\
   &\hspace{3.0cm}  \psi(\ve x)\psi(\ve \eta)\psi(\ve' y+\ve'x)
   \psi(\ve'\hat{\eta}+\ve'\xi)d\hat{\eta} dy d\eta dx, 
\end{align*}
where $\psi \in \Cinf_c(\RR^d)$ is a cutoff near $0$ with 
$\supp \psi \subset \{x\in \RR^d|\,|x|<1/4\}$. 
We also changed the variables from $y$ and $\hat{\eta}$ to $y+x$ and $\hat{\eta}+\xi$, respectively. 

We next insert 
\[
1=(1-\psi(y)\psi(\hat{\eta}))+\psi(y)\psi(\hat{\eta})
\] 
into the integrand and estimate the
$\lim_{\ve' \to 0}(2\pi h)^{-d}\int_{\RR^{2d}}\cdots d\hat{\eta} dy$ part. 
We set 
\[
L_3=\frac{-\hat{\eta}hD_y-yhD_{\hat{\eta}}}{|\hat{\eta}|^2+|y|^2}. 
\]
We see that the $1-\psi(y)\psi(\hat{\eta})$ term contributes as $h^{\infty}S$ 
if we use ${}^{t}L_3^N$ with $N \gg 1$. 
To estimate the $\psi(y)\psi(\hat{\eta})$ term, 
we apply the stationary phase method (\cite[Theorem~7.7.6]{H1}) with respect to $(\hat{\eta}, y)$. 
The stationary point ($\partial_{\hat{\eta}, y}\phi=0$) 
is $(\hat{\eta}, y)=(0, 0)$ and we have $\mathrm{sgn}\partial_{\hat{\eta}, y}^2\phi=0$ 
and $|\det \partial_{\hat{\eta}, y}^2\phi|=1$ there. 
We then obtain an asymptotic expansion with respect to $h$ in $S$ 
with the leading term $p_0(\xi-i\Phi(x, x))$. Note that $\Phi(x, x)=\partial \rho(x)$. 

Finally we assume (\ref{asm-rho}) and prove the asymptotic expansion in $S^0$. 
For this, we change the variables from $y$ to $\langle x \rangle y$ and introduce 
$\tilde{h}=h\langle x\rangle^{-1}$. 
Then we have 
\begin{align*}
&e^{\rho(hD_{\xi})/h}p_0(\xi)e^{-\rho(hD_{\xi})/h}u(\xi)\\
&=\lim_{\ve \to 0}\lim_{\ve' \to 0}(2\pi h)^{-d}(2\pi \tilde{h})^{-d}\int_{\RR^{4d}}
 e^{i \langle \xi-\eta, x \rangle/h} e^{-i\langle y, \hat{\eta}\rangle/\tilde{h}}
 p_0(\hat{\eta}+\xi-i\Phi(x,\langle x \rangle y+x))\\
&\hspace{2.5cm}  u(\eta)\psi(\ve x)\psi(\ve \eta)\psi(\ve' \langle x \rangle y+\ve'x)
\psi(\ve'\hat{\eta}+\ve'\xi) d\hat{\eta} dy d\eta dx. 
\end{align*}
We insert 
\[
1=(1-\psi(y))+\psi(y)(1-\psi(\hat{\eta}))+\psi(y)\psi(\hat{\eta})
\] 
into the integrand and estimate the
$\lim_{\ve' \to 0}(2\pi \tilde{h})^{-d}\int_{\RR^{2d}}\cdots d\hat{\eta} dy$ part. 
We set 
\[
\tilde{L}_3=\frac{-\hat{\eta}\tilde{h}D_y-y\tilde{h}D_{\hat{\eta}}}{|\hat{\eta}|^2+|y|^2} 
\hspace{0.2cm}\text{and}\hspace{0.2cm} 
\tilde{L}_4=\frac{-y\tilde{h}D_{\hat{\eta}}}{|y|^2}. 
\]
We see that the $1-\psi(y)$ term contributes as $h^{\infty}S^{-\infty}$ 
if we use ${}^{t}\tilde{L}_3^{d+1}$ and ${}^{t}\tilde{L}_4^{N}$ with $N \gg 1$. 
We also see that the $\psi(y)(1-\psi(\hat{\eta}))$ term contributes as $h^{\infty}S^{-\infty}$ 
if we use ${}^{t}\tilde{L}_3^{N}$ with $N \gg 1$. 
To see this, we note that 
\[
|\partial_y^{\alpha}\Phi(x, \langle x \rangle y+x)|\le C_{\alpha} 
\hspace{0.2cm}\text{for any}\hspace{0.2cm}\alpha \in \ZZ^d_{\ge 0}
\] 
since $|\langle x \rangle y+x|\ge |x|/2$ for $|x|\ge 1$ and $|y|\le 1/4$. 
We apply the stationary phase method to the $\psi(y)\psi(\hat{\eta})$ term 
and obtain asymptotic expansion with respect to $h\langle x \rangle^{-1}$ in $S^0$ by the 
above estimate on $\partial_y^{\alpha}\Phi(x, \langle x \rangle y+x)$. 
These complete the proof.
\end{proof} 

\begin{remark}
The use of the Gaussian weight to justify contour deformation in the oscillatory integral 
is found in the context of the resonance theory (see Galkowski-Zworski~\cite[Appendix~B.1]{GZ}).
\end{remark}

\begin{remark}
The second part of Lemma~\ref{lem-symbol} is used in Section~4.
\end{remark}

This lemma implies that the semiclassical principal symbol is given by 
\[
\sigma_h(\widetilde{H}_{\rho}(h))=p(\xi-i\partial\rho(x), x).
\] 

In the proof of the Agmon estimate, we treat unbounded $\rho\in \Cinf(\RR^d; \RR)$ 
such that $\rho$ is lower semibounded and $\partial \rho \in \Cinf_b(\RR^d, \RR^d)$. 
Take $\nu(t) \in \Cinf(\RR; \RR)$ with $0\le \nu'(t)\le 1$ and $\nu''(t)\le 0$ 
such that $\nu(t)=t$ for $t<0.9$ and $\nu(t)=1$ for $t>1.1$. 
We set $\rho_M(x)=M\nu(\rho(x)/M)$. 
We note that $\rho_M(x)\nearrow \rho(x)$ when $M\to \infty$ since $\nu''(t)\le 0$.

The proof of Lemma~\ref{lem-symbol} implies that 
the first statement in Lemma~\ref{lem-symbol} with $\rho$ replaced by $\rho_M$ is valid uniformly for $M>1$ 
since $\partial \rho_M \in \Cinf_b(\RR^d, \RR^d)$ uniformly for $M>1$. 
The second statement with $\rho$ replaced by $\rho_M$ is also valid uniformly for $M>1$ 
if we add the assumption that $\rho(x)\gtrsim |x|$ for large $|x|$ 
to ensure that (\ref{asm-rho}) with $\rho$ replaced by $\rho_M$ 
is valid uniformly for $M>1$. 

We set 
\[
\widetilde{H}_{M}(h)=e^{\rho_M(hD_{\xi})}\widetilde{H}(h)e^{-\rho_M(hD_{\xi})}.
\]
It may be possible to prove that 
$\widetilde{H}_{\rho}(h)=\lim_{M \to \infty}\widetilde{H}_{M}(h)\in \mathrm{Op}S$ and that 
this is given by the integral expression in the proof of Lemma~\ref{lem-symbol}.  
In fact, in the proof of the Agmon estimate, we do not use this and 
we take the limit $M\to \infty$ in a later step of the proof.

\subsection{The Agmon-Finsler metric}
We recall that 
\[
p_0(\xi)=\sum_{j=1}^d (2-2\cos\xi_j)=4\sum_{j=1}^d \sin^2\frac{\xi_j}{2}.
\] 
We will find a condition which ensures that the real part of 
\[
p_0(\xi-i\partial \rho(x))+V(x)-E
\]
is positive away from $\mathcal{G}_E=\{x \in \RR^d|\, V(x)\le E\}$. 
Since
\begin{align*}
4\sin^2\frac{\xi+i\lambda}{2}
&=4(\sin\frac{\xi}{2}\cosh\frac{\lambda}{2}+i\cos\frac{\xi}{2}\sinh\frac{\lambda}{2})^2,
\end{align*}
we have 
\[
\mathrm{Re}\,(4\sin^2\frac{\xi+i\lambda}{2})\ge -4 \sinh^2\frac{\lambda}{2}.
\]
This implies that 
\begin{equation}\label{realpart}
\mathrm{Re}\,\bigl(p_0(\xi-i\partial \rho(x))+V(x)-E\bigr) \ge 
V(x)-E-4\sum_{j=1}^d \sinh^2\frac{\partial_j \rho(x)}{2}.
\end{equation}

We set 
\[
K_x=\bigl\{\xi \in \RR^d \bigm|\, \sum_{j=1}^d \sinh^2\frac{\xi_j}{2} \le 
\frac{(V(x)-E)_+}{4}\bigr\}, 
\]
which is interpreted as a subset of $T^*_x \RR^d$. 

We present a construction (which is valid for more general $K_x$) of a function $d(x)$ such that 
\[
\partial d(x) \in K_x
\] 
for (almost all) $x\in \RR^d$. 
For this, we define a Finsler metric as the supporting function of $K_x$ (\cite{KR1}); 
\[
L(x, v)=\sup_{\xi\in K_x}\langle \xi, v \rangle,  
\]
which gives the length of $v \in T_x \RR^d=\RR^d$ in this metric. 

\begin{remark}
We note that $K_x$ for $x$ with $V(x)>E$ is a strictly convex compact set with $0\in K_x$ 
such that $\partial K_x$ is smooth and has non-vanishing Gaussian curvature. 
This implies that $\bigl(\frac{1}{2}\partial_{v_i}\partial_{v_j}L(x, v)^2\bigr)_{ij}$ is positive 
definite for $v\not =0$ and $x$ with $V(x)>E$. 
Thus $L(x, v)$ satisfies the conditions of the definition of the 
Finsler metric (\cite[Section~1.1]{BCZ}) on $\mathcal{G}_E^c=\{x \in \RR^d|\, \,V(x)>E\}$. 
\end{remark}

We set 
\[
d_E(x, y)=\inf_{x(\bcdot)}\int_0^1 L(x(t), x'(t))dt, 
\] 
where $x(\bcdot): [0, 1] \to \RR^d$ ranges over $C^1$ curves such that $x(0)=x$ and 
$x(1)=y$. Note that $d_E(x, y)=d_E(y, x)$ since $L(x, v)=L(x, -v)$. 
Take any closed set $\mathcal{G}$ in $\RR^d$. We set 
\[
d_{\mathcal{G}}(x)=d_{E, \mathcal{G}}(x)=\inf_{y\in \mathcal{G}}d_E(x, y). 
\]
Note that $d_{\mathcal{G}}$ is a Lipschitz continuous function. We then have the following. 
\begin{lemma}\label{lem-grad}
For almost all $x \in \RR^d$, 
\[
\partial d_{\mathcal{G}}(x) \in K_x.
\] 
\end{lemma}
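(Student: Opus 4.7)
The plan is to exploit the fact that $L(x, \cdot)$ is, by construction, the support function of the closed convex set $K_x$. Concretely, I will show that at every point $x$ where $d_{\mathcal{G}}$ is differentiable,
\[
\langle \partial d_{\mathcal{G}}(x), v \rangle \le L(x, v) \quad \text{for every } v \in \RR^d,
\]
from which $\partial d_{\mathcal{G}}(x) \in K_x$ will follow by the standard characterization of a closed convex set by its support function. Recall that $K_x$ is convex (a sublevel set of the sum of the convex functions $\xi_j \mapsto \sinh^2(\xi_j/2)$) and contains $0$, so Hahn--Banach separation yields the identity $K_x = \{\xi \in \RR^d : \langle \xi, v \rangle \le L(x, v) \text{ for all } v \in \RR^d\}$.

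To produce the inequality above, I first note that $d_{\mathcal{G}}$ is Lipschitz in the Euclidean sense: since $V \in \Cinf_b(\RR^d)$ by Assumption~\ref{asm-Agmon}, the sets $K_x$ are uniformly bounded, hence $L(x, v) \le C|v|$ uniformly in $x$, and therefore $d_E(x, y) \le C|x - y|$. Rademacher's theorem then gives differentiability almost everywhere. At a point $x$ of differentiability, applying the triangle inequality $d_{\mathcal{G}}(x + tv) \le d_{\mathcal{G}}(x) + d_E(x, x + tv)$ and testing $d_E$ against the straight-line curve $s \mapsto x + sv$, $s \in [0, t]$, yields
\[
d_{\mathcal{G}}(x + tv) - d_{\mathcal{G}}(x) \le \int_0^t L(x + sv, v)\, ds = t L(x, v) + o(t)
\]
as $t \to 0^+$. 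Dividing by $t$, letting $t \to 0^+$, and observing that $v \in \RR^d$ is arbitrary produces the desired bound.

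The step needing the most care is the expansion $\int_0^t L(x + sv, v)\, ds = tL(x, v) + o(t)$, i.e.\ continuity of $L(\cdot, v)$ in $x$. This holds because the family $\{K_x\}_{x \in \RR^d}$ varies continuously in $x$ in the Hausdorff sense: the defining convex function $\sum_j \sinh^2(\xi_j/2)$ is fixed, the level $(V(x) - E)_+/4$ depends continuously on $x$, and $K_x$ is never empty (it always contains $0$). The support function of such a family is continuous in the parameter, so the expansion is routine. Modulo this continuity, the argument is the familiar eikonal-type derivation of the fact that the Finsler length of the gradient of a Finsler distance function is at most $1$, adapted to the degeneracy of $L$ on $\mathcal{G}_E$.
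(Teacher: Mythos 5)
Your proposal takes essentially the same route as the paper: apply the triangle inequality $d_{\mathcal{G}}(x+tv)\le d_{\mathcal{G}}(x)+d_E(x,x+tv)$ at a Rademacher point of differentiability, let $t\to 0$ to obtain $\langle\partial d_{\mathcal{G}}(x), v\rangle\le L(x,v)$ for all $v$, and conclude via the support-function characterization of the convex set $K_x$. You merely spell out the step $\limsup_{t\to 0^+}d_E(x,x+tv)/t\le L(x,v)$ (by testing the straight segment and using continuity of $L(\cdot,v)$), which the paper leaves implicit; otherwise the argument coincides.
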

\begin{proof}
Take $x$ such that $d_{\mathcal{G}}(x)$ is differentiable at $x$. 
Take any $v \in T_x \RR^d$. By the triangle inequality, we have 
\[
|d_{\mathcal{G}}(x)-d_{\mathcal{G}}(x+tv)|/t\le d_E(x, x+tv)/t.
\]
Taking limit $t\to 0$, we obtain 
\[
|\langle \partial d_{\mathcal{G}}(x), v\rangle|\le L(x, v).
\]
Recall that the compact convex set $K_x$ is recovered from its supporting function as 
\[
K_x=\{\xi \in \RR^d |\,\langle \xi, v \rangle\le L(x, v) \hspace{0.15cm}\text{for any}
\hspace{0.15cm} v\in \RR^d\}
\]
(see~\cite[Section 4.3]{H1}). This implies $\partial d_{\mathcal{G}}(x) \in K_x$.
\end{proof} 

Then the exponential decay of the eigenfunctions is stated in terms of 
\[
d_E(x)=d_{E, \mathcal{G}_E}(x). 
\]
By the inequality~\eqref{realpart} and Lemma~\ref{lem-grad}, we have 
\[
\mathrm{Re}\,\bigl(p_0(\xi-i\partial d_{E}(x))+V(x)-E\bigr) \ge 0.
\]
outside $\mathcal{G}_E$.

\subsection{Proof of Theorem~\ref{thm-2}}
In the proof of Theorem~\ref{thm-2}, we should modify $d_E(x)$ as follows. 
For a given $\ve>0$, we take a sufficiently small $\delta>0$. 
In the following, we fix $\psi_{\delta}\in \Cinf_c(\RR; \RR_{\ge 0})$ such that 
$\supp \psi_{\delta}\subset \{x\in \RR^d|\,  |x|<\delta/30\}$ and 
$\int_{\RR^d} \psi_{\delta}(x)dx=1$. 
Set $\chi=\mathbbm{1}_{\mathcal{G}_{E, \frac{3}{4}\delta}^c}*\psi_{\delta}$, 
$\chi_1=\mathbbm{1}_{\mathcal{G}_{E, \frac{1}{2}\delta}^c}*\psi_{\delta}$ and 
$\tilde{\chi}=\mathbbm{1}_{\mathcal{G}_{E, \frac{7}{8}\delta}
\setminus\mathcal{G}_{E, \frac{5}{8}\delta}}*\psi_{\delta}$. 
Here $\mathbbm{1}$ denotes the indicator function of a set. 
Then $\chi, \chi_1, \tilde{\chi}\in \Cinf_b(\RR^d; [0, 1])$, 
$\chi\chi_1=\chi$, $\tilde{\chi}\partial \chi=\partial \chi$ and 
\[
\supp (1-\chi)\subset \mathcal{G}_{E, \delta}, \hspace{0.2cm}
\supp \tilde{\chi}\subset \mathcal{G}_{E, \delta}\setminus \mathcal{G}_{E, \delta/2}.
\]
By mollifying $(1-\ve)d_{E, \mathcal{G}_{E, \delta}}$, 
we obtain $\rho \in \Cinf(\RR^d; \RR_{\ge 0})$ satisfying   
$|\rho(x)-(1-\ve)d_E(x)|\le\ve$ for $x \in \RR^d$ and 
$\partial\rho(x)\in (1-\ve/2)K_x$ on $\supp\chi_1$.
Moreover, $\mathrm{dist}(\supp\rho, \supp\partial \chi)>\delta/10$. 
 
Define $\rho_M$ and $\widetilde{H}_{M}(h)$ from this $\rho$ as in subsection~3.1. 
Then $\partial\rho_{M}(x)\in (1-\ve/2)K_x$ on $\supp\chi_1$. 

\begin{proof}[Proof of Theorem~\ref{thm-2}]
Take any $z\in [E-C_0, E+C_0 h]+i[-C_0, C_0]$ for a fixed $C_0$. 
Lemma~\ref{lem-symbol} implies that 
\begin{align*}
\chi_1(hD_{\xi})(\widetilde{H}_{M}(h)-z)^*
(\widetilde{H}_{M}(h)-z)\chi_1(hD_{\xi})-\gamma^2\chi_1(hD_{\xi})^2
\end{align*}
belongs to $\mathrm{Op}S$ uniformly for $M>1$ and its principal symbol is 
\[
\chi_1(x)^2|p(\xi-i\partial\rho_M(x), x)-z|^2-\gamma^2\chi_1(x)^2.
\]
Then the inequality~\eqref{realpart} and the estimate for $\partial\rho_M(x)$ above 
imply that this is nonnegative for small $\gamma>0$ 
(we replace $z$ with $z-C_0 h$ if $E\le \mathrm{Re}z \le E+C_0 h$).

Thus the G\r{a}rding inequality implies that there exists $h_0>0$ such that   
\[
\|(\widetilde{H}_{M}(h)-z)\chi_1(hD_{\xi})\hat{u}\|_{L^2(\TT^d)}\ge
\gamma\|\chi_1(hD_{\xi})\hat{u}\|_{L^2(\TT^d)}-\frac{\gamma}{2}\|\hat{u}\|_{L^2(\TT^d)}
\]
for any $\hat{u}\in L^2(\TT^d)$ and $0<h<h_0$. 
Here $h_0$ is independent of $M>1$ by the uniformity mentioned above. 
Replacing $\hat{u}$ with $\chi(hD_{\xi})\hat{u}$, this implies 
\[
\|e^{\rho_M(x)/h}(H(h)-z)e^{-\rho_M(x)/h}\chi u\|_{\ell^2}\ge \frac{\gamma}{2}\|\chi u\|_{\ell^2}
\]
for $u\in \ellhd$ and $0<h<h_0$. Replacing $u$ with $e^{\rho_M(x)/h}u$, we obtain 
\[
\|e^{\rho_M(x)/h}(H(h)-z)\chi u\|_{\ell^2}\ge \frac{\gamma}{2}\|e^{\rho_M(x)/h}\chi u\|_{\ell^2}
\]
for $u\in \ellhd$ and $0<h<h_0$. 
Taking the limit $M\to \infty$, this is valid with $\rho_M(x)$ replaced by $\rho(x)$.
Thus we have
\begin{align*}
\|\chi e^{\rho(x)/h}u\|_{\ell^2}
&\le C \|e^{\rho(x)/h}(H(h)-z)\chi u\|_{\ell^2} \\
&\le C \|\chi e^{\rho(x)/h}(H(h)-z) u\|_{\ell^2}+C\|e^{\rho(x)/h}[H(h), \chi]u\|_{\ell^2} \\
&\le C \|\chi e^{\rho(x)/h}(H(h)-z) u\|_{\ell^2}+C\|\tilde{\chi} u\|_{\ell^2}.
\end{align*}
The last inequality follows from the fact that $\rho=0$ near $\supp\partial\chi$ 
and $\tilde{\chi}=1$ near $\supp \partial \chi$. 
\end{proof}

\section{The exponential decay of eigenfunctions for discrete Schr\"{o}dinger operators}

\subsection{Proof of Theorem~\ref{thm-3}}

Recall that $q(\xi)=4\sum_{j=1}^d \sinh^2\frac{\xi_j}{2}$, 
$K^E=\{\xi\in \RR^d |\, q(\xi)\le|E|\}$, 
$G_E(x)=\partial q(\xi)/|\partial q(\xi)|$ 
for $\xi\in \partial K^E$ and 
\[
\rho_E(x)=\sup_{\xi \in K^E}\langle x, \xi \rangle=x\cdot G_E^{-1}\left(\frac{x}{|x|}\right). 
\] 
\begin{lemma}\label{eikonal}
The function $\rho_E(x)$ satisfies the eikonal equation 
\[
q(\partial \rho_E(x))=|E| \hspace{0.2cm} \text{for any} \hspace{0.2cm} x\in \RR^d\setminus\{0\}.
\]
\end{lemma}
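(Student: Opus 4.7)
The plan is to recognize $\rho_E$ as the support function of the convex body $K^E$ and exploit the classical fact that the gradient of such a support function at $x \ne 0$ is precisely the boundary point of $K^E$ whose outward unit normal is $x/|x|$.

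First I would check that $K^E$ is a smooth, strictly convex body. The function $q(\xi) = 4\sum_j \sinh^2(\xi_j/2)$ has Hessian $\mathrm{diag}(2\cosh \xi_j)$, which is positive definite on $\RR^d$, so $q$ is strictly convex and the sublevel set $K^E$ is strictly convex with smooth boundary $\{q = |E|\}$ of non-vanishing Gaussian curvature (note $\partial q = 0$ only at $\xi = 0$, which lies in the interior since $q(0)=0 < |E|$). In particular $G_E : \partial K^E \to \mathbb{S}^{d-1}$ is a smooth diffeomorphism, as already used in the paper.

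Next, for fixed $x \in \RR^d \setminus \{0\}$, the linear functional $\xi \mapsto \langle x, \xi \rangle$ attains its maximum on the compact set $K^E$ at a unique boundary point $\xi^*(x) \in \partial K^E$, strict convexity ruling out multiple maximizers. The Lagrange condition reads $x = \lambda\, \partial q(\xi^*(x))$ for some $\lambda > 0$ (the sign is forced because we are maximizing), hence $\partial q(\xi^*(x))/|\partial q(\xi^*(x))| = x/|x|$, i.e., $G_E(\xi^*(x)) = x/|x|$, which gives $\xi^*(x) = G_E^{-1}(x/|x|)$, matching the explicit formula for $\rho_E$ in the statement. The implicit function theorem applied to the Lagrange system $x = \lambda\, \partial q(\xi)$, $q(\xi) = |E|$ shows that $\xi^*$ depends smoothly on $x$ on $\RR^d \setminus \{0\}$.

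The key step is then to show $\partial \rho_E(x) = \xi^*(x)$. Using $\rho_E(x) = \langle x, \xi^*(x) \rangle$ and differentiating,
\[
\partial_j \rho_E(x) = \xi^*_j(x) + \langle x, \partial_j \xi^*(x) \rangle.
\]
The correction term vanishes: differentiating $q(\xi^*(x)) = |E|$ in $x_j$ gives $\langle \partial q(\xi^*(x)), \partial_j \xi^*(x) \rangle = 0$, and since $\partial q(\xi^*(x))$ is parallel to $x$, we conclude $\langle x, \partial_j \xi^*(x) \rangle = 0$. Therefore $\partial \rho_E(x) = \xi^*(x) \in \partial K^E$, and the lemma reduces to the tautology $q(\xi^*(x)) = |E|$. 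The main obstacle, such as it is, is merely the bookkeeping around smoothness and uniqueness of $\xi^*(x)$; once those are in place, this envelope-type identity makes the eikonal equation immediate.
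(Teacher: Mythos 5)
Your proof is correct and takes essentially the same approach as the paper: both identify $\partial\rho_E(x) = G_E^{-1}(x/|x|)$ by differentiating the explicit formula $\rho_E(x) = x\cdot G_E^{-1}(x/|x|)$ and observing that the correction term $\langle x, \partial_j\xi^*(x)\rangle$ vanishes because $\partial q(\xi^*(x))$ is parallel to $x$ and tangent to the constraint surface. You supply some extra groundwork (strict convexity of $K^E$, the Lagrange-multiplier identification of the maximizer) that the paper leaves implicit, but the core computation is identical.
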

\begin{proof}
By the definition of $G_E$, we have 
\[
q(G_E^{-1}(x/|x|))=|E| \hspace{0.2cm} \text{and} \hspace{0.2cm} (\partial q)(G_E^{-1}(x/|x|))=x/|x|.
\] 
Differentiating the first equality and using the second, we obtain 
\[
x\cdot \partial_{x_j} (G_E^{-1}(x/|x|))=0.
\]
This and the definition of $\rho_E$ imply that 
\[
\partial \rho_E(x)=G_E^{-1}(x/|x|)
\]
and thus 
\[
q(\partial \rho_E(x))=q(G_E^{-1}(x/|x|))=|E|.
\]
\end{proof}

\begin{remark}\label{rem-Finsler2}
Set $\Lambda_0=T^*_0 \RR^d\cap\{q(\xi)=|E|\}$, which is a ($d-1$)-dimensional isotropic submanifold of 
$T^*\RR^d$. Then the solution in Lemma~\ref{eikonal} corresponds to the Lagrangian submanifold 
$\Lambda=\bigcup_{t>0}\Lambda_t$, where $\Lambda_t$ is the image of $\Lambda_0$ 
under the time $t$ map of the Hamilton flow generated by $q(\xi)$. 
\end{remark}

\begin{proof}[Proof of Theorem~\ref{thm-3}]
Take a smooth modification $\tilde{\rho}_E(x)$ of $\rho_E(x)$ such that 
$\tilde{\rho}_E(x)=\rho_E(x)$ for $|x|>1$. 
We see that 
$|\partial^{\alpha}\tilde{\rho}_E(x)|\le C_{\alpha}\langle x \rangle^{-|\alpha|}$ 
for any $\alpha \in \ZZ^d_{\ge 0}$. 
We also note that $\tilde{\rho}_E(x)\gtrsim |x|$ for large $|x|$. 
For a given small $\ve>0$, we define 
$\rho_M$ and $\widetilde{H}_{M}=\widetilde{H}_{M}(1)$ 
from $(1-\ve)\tilde{\rho}_E$ as in subsection~3.1.

Take any $z\in [E-C_0, E]+i[-C_0, C_0]$ for a fixed $C_0$. 
We also take $\chi_1\in \Cinf(\RR^d; [0, 1])$ such that $\supp \chi_1 \subset \{x\in \RR^d|\,|x|>R-2\}$ 
and $\chi_1(x)=1$ for $|x|>R-1$. 
Then Lemma~\ref{lem-symbol} implies that 
\begin{align*}
\chi_1(D_{\xi})(\widetilde{H}_M-z)^*
(\widetilde{H}_M-z)\chi(D_{\xi})-\gamma^2\chi_1(D_{\xi})^2 
\end{align*}
belongs to $\mathrm{Op}S^0_{\theta, 0}$ uniformly for $M>1$ and its principal symbol is 
\[
\chi_1(x)^2|p(\xi-i\partial \rho_M(x), x)-z|^2-\gamma^2\chi_1(x)^2, 
\]
where $0<\theta\le 1$ is that in Assumption~\ref{asm-Agmon2}. 
If $R>2$ is sufficiently large and $\gamma>0$ is sufficiently small, 
this is everywhere nonnegative for any $M>1$
by the construction of $\tilde{\rho}_E$, Lemma~\ref{eikonal} and Assumption~\ref{asm-Agmon2}.

Then the sharp G{\aa}rding inequality implies that 
\begin{align*}
\|(\widetilde{H}_M-z)\chi_1(D_{\xi})\hat{u}\|^2_{L^2}-\gamma^2 \|\chi_1(D_{\xi})\hat{u}\|^2_{L^2}
\ge -C\|\hat{u}\|^2_{H^{-\theta/2}} 
\end{align*}
for any $\hat{u}\in L^2(\TT^d)$. Here $H^{-\theta/2}$ denotes the Sobolev space on $\TT^d$.
We replace $\hat{u}$ with $\chi(D_{\xi})\hat{u}$, where 
$\chi\in \Cinf(\RR^d; [0, 1])$ satisfies $\supp \chi \subset \{x\in \RR^d|\,|x|>R\}$ 
and $\chi(x)=1$ for $|x|>R+1$. 
Then we have 
\[
\|(\widetilde{H}_{M}-z)\chi(D_{\xi})\hat{u}\|^2_{L^2}-\gamma^2 \|\chi(D_{\xi})\hat{u}\|^2_{L^2}
\ge -C\|\chi(D_{\xi})\hat{u}\|^2_{H^{-\theta/2}}.
\]
Taking $R>1$ large enough, we see that 
\[
C\|\chi(D_{\xi})\hat{u}\|^2_{H^{-\theta/2}}\le \frac{\gamma^2}{2}\|\chi(D_{\xi})\hat{u}\|^2_{L^2}.
\]
Here $C$ and thus $R$ are independent of $M>1$ by the uniformity mentioned above. 
This implies that 
\[
\|e^{\rho_M(x)}(H-z)e^{-\rho_M(x)}\chi(x)u\|_{\ell^2}\ge \frac{\gamma}{2}\|\chi(x)u\|_{\ell^2}
\]
for any $u\in \ell^2(\ZZ^d)$. This implies that 
\[
\|e^{\rho_M(x)}(H-z)\chi(x)u\|_{\ell^2}
\ge \frac{\gamma}{2}\|e^{\rho_M(x)}\chi(x)u\|_{\ell^2}
\]
for any $u\in \ell^2(\ZZ^d)$. 
Taking the limit $M\to \infty$, we have 
\[
\|e^{(1-\ve)\rho_E(x)}(H-z)\chi(x)u\|_{\ell^2}
\ge \frac{\gamma}{2}\|e^{(1-\ve)\rho_E(x)}\chi(x)u\|_{\ell^2}
\]
for any $u \in \ell^2(\ZZ)$. 
Then Theorem~\ref{thm-3} follows if 
we calculate the commutator as in the proof of Theorem~\ref{thm-2} and take
$\tilde{\chi}\in \ell^{\infty}_{\mathrm{comp}}$ which is $1$ on $\{x\in \ZZ^d| \, R-1<|x|<R+2\}$.  
\end{proof}

\subsection{The optimality of Theorem~\ref{thm-3}}
We prove that the exponential decay of eigenfunctions in Corollary~\ref{coro1} 
is optimal for a simple discrete Schr\"{o}dinger operator. 
Fix any $E<0$ and define $u_E\in \ell^2(\ZZ^d)$ by 
\[
u_E(x)=(2\pi)^{-d}\int_{\TT^d}
\bigl(4\sum_{j=1}^d \sin^2 \frac{\xi_j}{2}+|E|\bigr)^{-1}e^{-i\langle x, \xi\rangle} d\xi.
\]
Then $(H_0+|E|)u_E(x)=\delta_0(x)$, 
where $H_0$ is the free discrete Schr\"{o}dinger operator and $\delta_0$ is the 
delta function supported on $0\in \ZZ^d$. 
We note that $u_E(0)>0$. 
Thus if we set $V(x)=-u_E(0)^{-1}\delta_0(x)$, we have $(H_0+V)u_E(x)=Eu_E(x)$. 
We study the exponential decay of this eigenfunction $u_E$. 
We note that Corollary~\ref{coro1} for $u_E$ follows from the deformation of the integral in 
the definition of $u_E$. 

Take a bounded domain $0\in \Omega\subset \RR^d$ 
and set 
\[
\rho_{\Omega}(x)=\sup_{\xi \in \Omega}\langle x, \xi \rangle. 
\] 
The following proposition gives the optimality of Theorem~\ref{thm-3} and Corollary~\ref{coro1}. 
Recall that 
$K^E=\{\xi\in \RR^d |\, 4\sum_{j=1}^d \sinh^2\frac{\xi_j}{2}\le|E|\}$.

\begin{prop}
Under the above notation, assume that 
\[
|u_E(x)|\le Ce^{-\rho_{\Omega}(x)}
\]
for some $C>0$ and any $x \in \ZZ^d$. Then $\Omega\subset K^{E}$.
\end{prop}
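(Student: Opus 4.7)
My plan is a Laplace-transform argument in which the positivity of $u$ plays the essential role. First, since $\rho_\Omega$ depends only on $\overline{\mathrm{conv}}(\Omega)$ and $K^E$ is convex, I reduce to the case where $\Omega$ is closed, convex, and has nonempty interior. If $\Omega \not\subset K^E$, then $\mathrm{int}(\Omega) \not\subset K^E$ (else $\Omega = \overline{\mathrm{int}(\Omega)} \subset K^E$), so I pick $\eta_0 \in \mathrm{int}(\Omega) \setminus K^E$ and $\delta > 0$ with $B(\eta_0,\delta) \subset \Omega$; this gives $\rho_\Omega(x) \geq \langle x,\eta_0\rangle + \delta|x|$ for all $x$.

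Introduce the Laplace transform $L(\eta) := \sum_{x \in \ZZ^d} u(x)\, e^{\langle x,\eta\rangle} \in [0,+\infty]$ and its convergence set $\mathcal{A} := \{L < \infty\}$. The key observation is that $u > 0$ pointwise: indeed $u$ is the Green kernel at $0$ of the resolvent $(H_0+|E|)^{-1} = \int_0^\infty e^{-s|E|}\,e^{-sH_0}\,ds$, which is positivity-preserving since the discrete heat semigroup $e^{-sH_0}$ is. Consequently the summands of $L$ are nonnegative, $\mathcal{A}$ is convex, and $L$ is real-analytic on $\mathrm{int}(\mathcal{A})$. The decay hypothesis combined with the inequality $\rho_\Omega(x) \ge \langle x,\eta_0\rangle + \delta|x|$ forces $L$ to be uniformly bounded on a small ball around $\eta_0$, so $\eta_0 \in \mathrm{int}(\mathcal{A})$; a standard Combes--Thomas estimate similarly places $0 \in \mathrm{int}(\mathcal{A})$, and on a neighborhood of $0$ one computes
\[
L(\eta) = \hat u(-i\eta) = \frac{1}{|E| - 4\sum_{j=1}^d \sinh^2(\eta_j/2)}.
\]

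The contradiction is delivered by analytic continuation together with the sign structure of this explicit formula. The right-hand side is real-analytic on $\RR^d \setminus \partial K^E$, with polar locus exactly $\partial K^E$. By the identity principle on the connected open set $\mathrm{int}(\mathcal{A}) \cap \mathrm{int}(K^E)$ (which contains $0$), $L$ coincides with the formula there. If $\mathrm{int}(\mathcal{A})$ met $\partial K^E$ at some point $\eta_*$, continuity of $L$ on $\mathrm{int}(\mathcal{A})$ would force $L(\eta_*) = \lim_{\eta \to \eta_*,\,\eta \in \mathrm{int}(K^E)} L(\eta) = +\infty$ via the formula---a contradiction. Thus $\mathrm{int}(\mathcal{A}) \cap \partial K^E = \emptyset$, and since $\mathrm{int}(\mathcal{A})$ is convex (hence connected) and meets $\mathrm{int}(K^E)$ at $0$, one concludes $\mathrm{int}(\mathcal{A}) \subset \mathrm{int}(K^E)$. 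In particular $\eta_0 \in K^E$, contradicting the choice of $\eta_0$.

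The main obstacle is the analytic-continuation step in the last paragraph: one must carefully propagate the explicit formula for $L$ from a neighborhood of $0$ throughout $\mathrm{int}(\mathcal{A}) \cap \mathrm{int}(K^E)$ via the identity principle, and then use real-analyticity and continuity of $L$ (together with positivity of $u$) to rule out any excursion of $\mathrm{int}(\mathcal{A})$ across $\partial K^E$. The remaining work---the convex-hull reduction, the positivity of the resolvent, and the Combes--Thomas bound---is essentially standard.
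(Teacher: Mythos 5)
Your proof is correct. Both your argument and the paper's rest on the same essential observation: the Laplace/Fourier transform of $u$ coincides with the explicit rational function $\bigl(|E|-4\sum_j\sinh^2(\eta_j/2)\bigr)^{-1}$, whose singular set is exactly $\partial K^E$, and the decay hypothesis forces the domain of convergence of the transform to contain $\Omega$; the conclusion $\Omega\subset K^E$ then follows by locating that domain relative to the (convex, symmetric) set $K^E$. The paper carries this out in one stroke via complex analysis: the Fourier series $\sum_x u(x)e^{i\langle x,\xi\rangle}$ converges absolutely on the tube $\{\xi\in\CC^d/2\pi\ZZ^d:-\mathrm{Im}\,\xi\in\Omega\}$ and thus furnishes a holomorphic, pole-free continuation of $\bigl(4\sum_j\sin^2(\xi_j/2)+|E|\bigr)^{-1}$ there; restricting to $\mathrm{Re}\,\xi=0$ and using connectedness of $\Omega$ with $0\in\mathrm{int}\,K^E$ forces $\Omega\cap\partial K^E=\emptyset$, hence $\Omega\subset K^E$. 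You instead work on the imaginary axis with the real Laplace transform $L(\eta)=\sum_x u(x)e^{\langle x,\eta\rangle}$, and you introduce the extra (correct, but not used by the paper) structural input that $u>0$, via positivity-preservingness of $(H_0+|E|)^{-1}$; this gives you convexity of $\mathcal{A}=\{L<\infty\}$ and local boundedness/real-analyticity of $L$, replacing the complex analytic continuation with a real-variable argument. That trade is perfectly legitimate: it buys a more elementary route (no several-complex-variable continuation, only convexity and the identity principle for real-analytic functions) at the cost of the positivity hypothesis, whereas the paper's version works for any $u$ satisfying only the Fourier inversion identity and the stated decay. Two minor remarks: the Combes--Thomas estimate you invoke to place $0\in\mathrm{int}(\mathcal{A})$ is redundant, since $0\in\Omega$ with $\Omega$ open already gives $\rho_\Omega(x)\ge\delta'|x|$ and hence exponential decay of $u$ directly from the hypothesis; and your convex-hull reduction is sound because $\rho_\Omega$ depends only on $\overline{\mathrm{conv}}(\Omega)$ and $K^E$ is closed and convex, so $\Omega\subset K^E$ is equivalent to $\overline{\mathrm{conv}}(\Omega)\subset K^E$.
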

\begin{proof}
The Fourier inversion formula implies 
\[
(4\sum_{j=1}^d \sin^2 \frac{\xi_j}{2}+|E|\bigr)^{-1}=\sum_{x\in \ZZ^d}u_E(x)e^{i\langle x, \xi \rangle}.
\]
The assumption on $u_E$ implies that 
\[
|u_E(x)e^{i\langle x, \xi \rangle}|\le C e^{-\rho_{\Omega}(x)}
e^{-\langle\mathrm{Im}\,\xi, x\rangle}. 
\]
This implies that $(4\sum_{j=1}^d \sin^2 \frac{\xi_j}{2}+|E|\bigr)^{-1}$ has an analytic continuation to 
$\{\xi\in \CC^d/2\pi \ZZ^d|\,-\mathrm{Im}\,\xi \in \Omega\}$. 
Since $4\sin^2 \xi_j/2=-4\sinh^2\mathrm{Im}\,\xi_j/2$ for $\mathrm{Re}\,\xi_j=0$, 
this implies $\Omega\subset K^E$.
\end{proof}

\begin{remark}
For $d=1$, it is known that $u_E(x)=(|E|(4+|E|))^{-1/2} e^{-\rho_E(x)}$ (\cite[Theorem~2.2]{ItJ}). 
Ito-Jensen~\cite[Theorem~2.1, 2.4]{ItJ} proved that $u_E$ is expressed 
by a hypergeometric function of several variables for $d\ge 2$ 
and by a generalized hypergeometric function of one variable for $d=2$. 
The precise asymptotics of $u_E(x)$ when $|x|\to \infty$ does not seem to be immediate from these expressions.
\end{remark}

\section*{Acknowledgement}
The author is grateful to Shu Nakamura and Kenichi Ito for helpful discussions and encouragement. 
The author 
is supported by JSPS KAKENHI Grant Number JP21J10860.

Graduate School of Mathematical Sciences, the University of Tokyo,
 3-8-1, \\Komaba, Meguro-ku, Tokyo 153-8914, Japan

E-mail address: kameoka@ms.u-tokyo.ac.jp

\end{document}